\documentclass[11pt]{article}

\usepackage[T1]{fontenc}
\usepackage[margin=1in]{geometry}
\usepackage{amsmath,amssymb,amsthm,mathtools}
\usepackage{microtype}
\usepackage[hidelinks]{hyperref}

\newtheorem{theorem}{Theorem}
\newtheorem{proposition}[theorem]{Proposition}
\newtheorem{lemma}[theorem]{Lemma}
\newtheorem{corollary}[theorem]{Corollary}

\theoremstyle{remark}
\newtheorem{remark}[theorem]{Remark}
\newcommand{\R}{\mathbb R}
\newcommand{\norm}[1]{\left\lVert #1\right\rVert}
\newcommand{\abs}[1]{\left\lvert #1\right\rvert}
\newcommand{\dd}{\,\mathrm d}
\newcommand{\D}{\mathcal D}

\title{Discontinuity of the Vlasov--Poisson Flow
in $L_x^pL_v^\infty$}
\author{Ke Chen\and In-Jee Jeong \and Quoc-Hung Nguyen \and Sangwook Tae}
\date{}

\begin{document}
\maketitle

\begin{abstract}
We prove that solution maps of the Vlasov--Poisson equation are discontinuous at the zero initial datum in $X_p=L_x^pL_v^\infty$ for every $1\leq p<\infty$, in $\mathbb{R}^{d} \times \mathbb{R}^{d}$ with $d \le 3$, and for both
attractive and repulsive interactions. For every $T>0$, the trajectory of the solution in $L^\infty([0,T];X_p)$ is
discontinuous at zero, and for every sufficiently small fixed $t>0$, the fixed-time map with values in $X_p$ is also discontinuous at zero. 
The counterexamples are smooth, nonnegative, bounded by one, and supported
in a common compact subset of phase space.  Their mass and initial
$X_p$ norm tend to zero, whereas their solution norm at the observation
time is at least one.  The construction places narrow spatial packets
on a lattice.  Free transport allows a different velocity to select a
packet at each spatial point, while a smallness estimate on the field ensures that the nonlinear characteristics are close to that of the free transport. 
For the same families, the phase-space $L^q$ norms converge to zero uniformly in time for every $1\leq q<\infty$.
\end{abstract}

\smallskip
\noindent\textbf{Keywords.} Vlasov--Poisson; kinetic transport;
mixed Lebesgue norms; continuous dependence.

\section{Introduction}\label{sec:introduction}

The Vlasov--Poisson equation describes the transport of a particle
distribution by the force generated by its own density.  We shall 
consider the Cauchy problem on $\R_x^d\times\R_v^d$, with $d \le 3$,
\begin{equation}\label{eq:VP}
 \partial_t f+v\cdot\nabla_x f+\sigma E_f\cdot\nabla_v f=0,
 \qquad
 E_f=K_d*\rho_f,
 \qquad
 \rho_f(t,x)=\int_{\R^d}f(t,x,v)\dd v,
\end{equation}
for $t\geq0$, with $f(0,x,v)=f_0(x,v)$ and
\begin{equation}\label{eq:kernel}
 K_d(x)=c_d\frac{x}{\abs{x}^{d}},
 \qquad c_d=\abs{\mathbb S^{d-1}}^{-1},
 \qquad \sigma\in\{-1,1\}.
\end{equation}
Here convolution is in the spatial variable, and $\mathbb S^{d-1}$ is
the unit sphere. The formula \eqref{eq:kernel} also works in the one dimensional case $d = 1$, with $K_{1}(x) = \frac12 \mathrm{sgn}(x)$. With this normalization, $\sigma=1$ gives repulsive
interactions and $\sigma=-1$ gives attractive interactions.  The estimates
below are uniform in this choice of sign.

In this paper, we are only concerned with smooth and compactly supported data, and the existence and uniqueness of global classical solutions are well-known. In one and two dimensions this follows from Ukai and
Okabe~\cite{UkaiOkabe}; in three dimensions it was obtained by
Pfaffelmoser~\cite{Pfaffelmoser}.  See also
Horst~\cite{Horst,Horst2} for the classical Cauchy theory.

We ask whether these classical solutions depend continuously on their
initial data when both are measured in the mixed space
\begin{equation}\label{eq:Xp}
 X_p=L_x^pL_v^\infty,
 \qquad
 \norm{g}_{X_p}
 =\left(\int_{\R^d}
     \left(\mathop{\rm ess\,sup}_{v\in\R^d}\abs{g(x,v)}\right)^p
 \dd x\right)^{1/p}.
\end{equation}
Here $1\leq p<\infty$.  The characteristic flow preserves phase-space
volume and all phase-space $L^q$ norms.  Kinetic transport does not,
however, preserve $X_p$: the spatial displacement depends on velocity,
whereas the velocity supremum in \eqref{eq:Xp} is taken separately at
each spatial point.  Already for free transport, denoted by $U(t)$,
\[
 U(t)g(x,v) := g(x-tv,v),
\]
the maximizing velocity may depend on $x$.  A spatially sparse datum can
therefore have small $X_p$ norm initially while its transported velocity
envelope fills a set of fixed spatial measure.  The question is whether
the nonlinear term in the Vlasov--Poisson field destroys this mechanism.

%All Lebesgue spaces use Lebesgue measure.  

%The dependence of constants is indicated in each statement.  In the packet estimates, constants may depend on $d$ and the fixed cutoffs, but not on $h$, $r$, or the sequence index; constants in the mixed-norm bounds may also depend on $p$.

Fix $d \le 3$ and $\sigma\in\{-1,1\}$, and let
\begin{equation}\label{eq:data-class}
 \D=\{g\in C_c^\infty(\R^{2d}):g\geq0\}.
\end{equation}
For each $f_0\in\D$, let $f$ be the corresponding global classical
solution.  It solves \eqref{eq:VP} pointwise and is transported by its
characteristic flow:
\begin{equation}\label{eq:characteristics}
 \dot X(s)=V(s),\qquad
 \dot V(s)=\sigma E_f(s,X(s)).
\end{equation}
We denote the characteristic ending at $(x,v)$ at time $t$ by
$(X(s;t,x,v),V(s;t,x,v))$.  It satisfies
\begin{equation}\label{eq:transport}
 f(t,x,v)=f_0\bigl(X(0;t,x,v),V(0;t,x,v)\bigr).
\end{equation}
On each finite time interval the phase-space support is contained in
a compact set, since it is transported by a continuous flow.

For $t\geq0$ and $T>0$, define the maps
\[
 S_t:\D\to X_p,\quad S_tf_0=f(t),\qquad
 \mathcal S_T:\D\to L^\infty([0,T];X_p),\quad
 \mathcal S_Tf_0=f|_{[0,T]},
\]
where we endow $\D$ with the topology induced by $X_p$.
%Both maps are defined on classical data.

Each solution belongs to $C([0,T];X_p)$: to see this, let $K_x$ be a compact set containing
the spatial support on $[0,T]$.  Uniform continuity on a compact set
containing the transported phase-space support gives
\[
 \norm{f(s)-f(t)}_{X_p}
 \leq \abs{K_x}^{1/p}\norm{f(s)-f(t)}_{L^\infty_{x,v}}
 \longrightarrow0\qquad(s\to t),
\]
where we denote the measure of a measurable set $A$ by $\abs{A}$.
Consequently, we have that 
\begin{equation}\label{eq:time-sup}
 \norm{f}_{L^\infty([0,T];X_p)}
 =\sup_{0\leq s\leq T}\norm{f(s)}_{X_p}.
\end{equation}

We prove that the trajectory solution map $\mathcal S_T$ is discontinuous on every
time interval $[0,T]$, with observation times tending to zero. In what follows, we write $B_R := \{v\in\R^d:\abs{v}<R\}$.
\begin{theorem}
\label{thm:norm-inflation}
Let $d \le 3$, $\sigma\in\{-1,1\}$, and $1\leq p<\infty$.
There exist a compact set $K_x\subset\R^d$, a radius $R>0$, a sequence
$t_n\downarrow0$, and data $f_{0,n}\in\D$ such that
\begin{equation}\label{eq:theorem-data}
 0\leq f_{0,n}\leq1,\qquad
 \operatorname{supp}f_{0,n}\subset K_x\times B_R,\qquad
 \norm{f_{0,n}}_{X_p}\to0,\qquad
 \norm{f_{0,n}}_{L^1_{x,v}}\to0,
\end{equation}
while the associated classical solutions satisfy
\begin{equation}\label{eq:theorem-inflation}
 \norm{f_n(t_n)}_{X_p}\geq1.
\end{equation}
In particular, for every $T>0$, the trajectory map $\mathcal S_T$ is
discontinuous at zero.
\end{theorem}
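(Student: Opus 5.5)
We construct lattice packets, check that free transport inflates the $X_p$ norm, and then show the self-consistent field is too weak to destroy this. Fix a cutoff $\chi\in C_c^\infty(\R^d)$ with $0\le\chi\le1$ and $\chi=1$ on $B_{1/2}$, supported in $B_1$. Take a lattice spacing $h=h_n\to0$ and a packet radius $r=r_n\ll h$, and let $\Lambda_h=h\mathbb Z^d\cap[0,1]^d$. Set
\[
 f_{0,n}(x,v)=\sum_{k\in\Lambda_h}\chi\Bigl(\frac{x-k}{r}\Bigr)\chi\Bigl(\frac{v-w}{\delta}\Bigr)\ \text{summed also over a velocity grid } w,
\]
or more simply $f_{0,n}(x,v)=\sum_k\chi((x-k)/r)\,\psi(v)$ with $\psi$ a fixed cutoff equal to $1$ on a ball $B_{R_0}$, $0\le\psi\le1$. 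The packets have disjoint supports, so $0\le f_{0,n}\le1$. We then get
\[
 \norm{f_{0,n}}_{X_p}^p\approx h^{-d}r^d\to0,\qquad \norm{f_{0,n}}_{L^1}\approx h^{-d}r^d\to0,
\]
provided $r/h\to0$. The support lies in $[-1,2]^d\times B_R$.

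Next, the free transport picture. Take the time $t_n=h/R_0$ up to a constant. Then $U(t_n)f_{0,n}(x,v)=\sum_k\chi((x-tv-k)/r)\psi(v)$. For any $x\in[0,1]^d$ pick the lattice point $k$ nearest to $x$, so $|x-k|\le\sqrt d\,h$, and set $v=(x-k)/t_n$. This gives $|v|\le\sqrt d\,R_0$, and after adjusting constants $\psi(v)=1$. The packet $k$ then sits exactly at $x$, so $\sup_v U(t_n)f_{0,n}(x,\cdot)=1$ on a set of measure $\ge1$, hence $\norm{U(t_n)f_{0,n}}_{X_p}\ge1$. For the nonlinear flow a little slack is needed: we require $\sup_v f_n(t_n,x,v)\ge1$ on a set of measure at least $1$, which follows if the backward characteristic from $(x,v)$ lands within $r/2$ of $(k,v)$. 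To be safe, use $\chi=1$ on $B_{1/2}$ and enlarge the spatial region to $[0,2]^d$ so the measure exceeds $1$.

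The key step, and the main obstacle, is the characteristic comparison. We need
\[
 \sup_{s\le t_n}\bigl|X(s;t_n,x,v)-(x-(t_n-s)v)\bigr|\le t_n^2\sup_{s\le t_n}\norm{E_{f_n}(s)}_{L^\infty}\ll r,
\]
and the velocity deviation $t_n\norm{E}_\infty$ is also small. To bound $\norm{E}_\infty$ we use $\norm{E}_\infty\lesssim\norm{\rho}_{L^1}^{1/d}\norm{\rho}_{L^\infty}^{1-1/d}$ (for $d=1$ simply $\le\frac12\norm{\rho}_{L^1}$). Here $\norm{\rho}_{L^1}$ is conserved and equals $\approx h^{-d}r^d$. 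For $\norm{\rho(s)}_\infty$ we bootstrap: as long as velocities stay within $B_{R+1}$, we have $\rho\le|B_{R+1}|$ since $f\le1$. This gives $\norm{E}_\infty\lesssim (r/h)$. The required condition is then $t_n^2 (r/h)\ll r$, i.e.\ $h^2/h\ll1$, which holds since $h\to0$. The velocity change $t_n\norm E_\infty\lesssim r\to0$ closes the bootstrap on $[0,t_n]$. The only delicate point is to make the bootstrap rigorous on a continuity argument using the global classical solution; the sign $\sigma$ plays no role.

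Finally, we choose $h_n\to0$ and $r_n=h_n^2$. This gives $t_n\downarrow0$, the data bounds \eqref{eq:theorem-data}, and \eqref{eq:theorem-inflation}. For any $T>0$, eventually $t_n\le T$, and by \eqref{eq:time-sup} we get $\norm{\mathcal S_Tf_{0,n}}_{L^\infty([0,T];X_p)}\ge1$ while $f_{0,n}\to0$ in $X_p$ and $\mathcal S_T0=0$. So $\mathcal S_T$ is discontinuous at zero.
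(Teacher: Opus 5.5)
Your proposal is correct and follows essentially the paper's route: lattice packets with a fixed velocity cutoff, the $x$-dependent terminal velocity $(x-k)/t_n$ selecting a packet at each point, the bound $\norm{E}_{L^\infty}\lesssim\norm{\rho}_{L^1}^{1/d}\norm{\rho}_{L^\infty}^{1-1/d}\lesssim r/h$ closed by a velocity-support bootstrap, and the resulting $O(hr)$ spatial and $O(r)$ velocity errors on backward characteristics (your $r_n=h_n^2$ in place of the paper's $h_n^3$ makes no difference). The only slip is in the constants: $\abs{x-k}\le\sqrt d\,h$ together with $t_n=h/R_0$ gives only $\abs{v}\le\sqrt d\,R_0$, which need not lie in the plateau $B_{R_0}$ when $d\ge2$. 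So the constant must go into the time, or, as the paper does, you can take $t_n=h$ with plateau $B_{2\sqrt d}$, which also leaves room for the $O(r)$ velocity error.
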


The fixed-time solution map $S_t$ is also discontinuous at every
sufficiently small positive time.

\begin{theorem}
\label{thm:fixed-time}
Let $d \le 3$ and $\sigma\in\{-1,1\}$.
There exist $t_*>0$, $R>0$, and a compact set $K_x\subset\R^d$, independent
of $p$, $t$, and $n$, with the following property.  For every
$1\leq p<\infty$ and $t\in(0,t_*)$ there are data
$f_{0,n}^{(t)}\in\D$ satisfying
\begin{equation}\label{eq:fixed-data}
 \begin{gathered}
 0\leq f_{0,n}^{(t)}\leq1,\qquad
 \operatorname{supp}f_{0,n}^{(t)}\subset K_x\times B_R,\qquad 
 \norm{f_{0,n}^{(t)}}_{X_p}\longrightarrow0,\qquad
 \norm{f_{0,n}^{(t)}}_{L^1_{x,v}}\longrightarrow0,
 \end{gathered}
\end{equation}
but
\begin{equation}\label{eq:fixed-conclusion}
 \norm{f_n^{(t)}(t)}_{X_p}\geq1
\end{equation}
for every $n$.  Hence $S_t:(\D,\norm{\cdot}_{X_p})\to X_p$ is
discontinuous at zero.
\end{theorem}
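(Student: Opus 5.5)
The proof runs the lattice-of-packets construction behind Theorem~\ref{thm:norm-inflation}, now at a fixed observation time $t\in(0,t_*)$ instead of $t_n\downarrow0$. Fix a unit spatial cube $Q\subset\R^d$ and velocity ball $B_R$, with $R$ chosen so that the velocities $(y-z)/t$, for $y,z\in Q$, lie in $B_R$ whenever $t\geq t_*/2$. For smaller $t$ we rescale the lattice: we place the lattice on a spatial cube $Q_t$ of side comparable to $t$ and take $K_x$ to be a fixed neighbourhood of $Q$. Then all needed velocities $(y-z)/t$ lie in a fixed ball $B_R$, independent of $t$.

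\emph{Data.} Let $h=h_n\to0$ be the lattice spacing and $r=r_n\ll h$ the packet radius, with $r/h\to0$. Pick smooth bumps $\chi$ in $x$ (radius $r$, centred at lattice points $z\in h\mathbb Z^d\cap Q_t$) and $\psi$ in $v$. Define
\[
 f_{0,n}(x,v)=\sum_z \chi\Bigl(\frac{x-z}{r}\Bigr)\,\psi\bigl(v\bigr)\,\mathbf 1\text{-smoothed},
\]
where the velocity profile attached to each packet is a bump equal to $1$ on $B_R$ (or a slightly larger ball). Then $0\le f_{0,n}\le1$, the support lies in $K_x\times B_{R+1}$, and the spatial support has measure of order $(r/h)^d$. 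Hence $\norm{f_{0,n}}_{X_p}\lesssim (r/h)^{d/p}\to0$ and $\norm{f_{0,n}}_{L^1}\lesssim (r/h)^d\to0$.

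\emph{Free transport.} For each $x$ in a set $A_t$ of measure comparable to $|Q_t|$, choose the nearest lattice point $z$ and set $v=(x-z)/t\in B_R$. Then $U(t)f_{0,n}(x,v)=1$, and it stays $\geq 1/2$ on a region of size $\sim r$ around that velocity. So $\norm{U(t)f_{0,n}}_{X_p}\gtrsim |Q_t|^{1/p}$. To reach the threshold $\geq1$ we multiply the number of copies, or amplify by taking $K_x$ fixed but large relative to $t$. Concretely, we replicate the cube pattern over a fixed region of measure $M_p\ge 2$; the data remain sparse, so the $X_p$ and $L^1$ norms still tend to zero.

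\emph{Nonlinear stability.} This is the main obstacle. We need to show that for $s\le t<t_*$ the backward characteristics from $(x,v)$ satisfy
\[
 |X(0;t,x,v)-(x-tv)|\le r/4,\qquad |V(0;t,x,v)-v|\le r/4 .
\]
Given this, $f_n(t,x,v)\ge \tfrac12$ on the same set, and the lower bound transfers with constants absorbed into the choice of amplitude and measure. The displacement is bounded by $t^2\sup_s\norm{E_{f_n}(s)}_{L^\infty}$. Since $|\rho|\lesssim \norm{f}_{L^\infty}R^d$ and the mass is at most $\norm{f_0}_{L^1}=:m_n\to0$, the interpolation bound $\norm{K_d*\rho}_\infty\lesssim \norm{\rho}_1^{1/d}\norm{\rho}_\infty^{1-1/d}$ gives $\norm{E}_\infty\lesssim m_n^{1/d}$. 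The difficulty is that $\rho$ at later times need not stay bounded. A bootstrap keeping the velocity support in $B_{2R}$ handles this: it gives $\norm{\rho}_\infty\lesssim R^d$ and $\norm{E}_\infty\lesssim m_n^{1/d}\ll r_n/t_*^2$, provided we choose $r_n$ decaying more slowly than $m_n^{1/d}$. With $m_n\sim (r/h)^d$ this requires $r/h\ll r$, i.e.\ $h\gg1$, which is contradictory.

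The fix is to let the velocity profile be thin as well, with width $\delta_n$. This makes $m_n\sim (r/h)^d\delta_n^d$ while the free-transport lower bound persists, because the target velocity $(x-z)/t$ is then only hit for $x$ near a thinned grid. So we instead use many velocity shells. The concrete plan is to take $m_n\to0$ faster than $r_n^d$ by choosing $h_n=r_n^{1/2}$, together with a fine velocity lattice of bumps of width $r_n/t$ centred at spacing $h_n/t$. This costs $L^1$ mass $(r/h)^{2d}$ and, at generic $x$, still allows selection up to errors $\lesssim h$ in space. Balancing these exponents, and checking that the displacement $t^2 m_n^{1/d}\ll r_n$, is the heart of the proof, and is where I expect most of the work to be.
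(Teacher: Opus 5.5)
There is a genuine gap. The nonlinear stability step, which you correctly call the heart of the proof, is never closed. The fix you sketch (thin velocity profiles, a velocity lattice, $h_n=r_n^{1/2}$) is unnecessary, and you leave its exponent balancing and covering argument unverified.

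The obstruction you hit comes from letting the lattice spacing $h_n\to0$ at a fixed observation time $t$. Redo your own estimate keeping the time factor. Lemma~\ref{lem:characteristics} gives $\norm{E}_{L^\infty}\lesssim M^{1/d}\lesssim r/h$, so the velocity error is $\lesssim tr/h$ and the position error is $\lesssim t^2r/h$. Making the position error a small fraction of $r$ requires $t^2\ll h$, not $h\gg1$ as you wrote; you dropped the $t^2$. This condition is incompatible with $h_n\to0$ at fixed $t$. However, the $X_p$ norm and the mass depend only on $r/h$, so nothing forces $h\to0$.

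The paper takes $h=t$ fixed and sends only $r_n\to0$. Then:
\begin{itemize}
\item every $x\in Q=(0,1)^d$ has a lattice point $z$ with $\abs{x-z}\leq\sqrt d\,t/2$;
\item the target velocity $v=(x-z)/t$ satisfies $\abs{v}\leq\sqrt d/2$ uniformly in $t$, so no rescaled cubes $Q_t$ or replicated patterns are needed;
\item the smallness condition $tM^{1/d}\lesssim r\leq c_0$ holds for $r<r_*$;
\item the velocity error is $O(r)$;
\item the position error is at most $C_3tr<r/2$ once $t<t_*=\min\{1,1/(4C_3)\}$.
\end{itemize}
This last point is exactly where the smallness of $t_*$ enters the theorem. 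Your argument never uses that $t_*$ is small, which signals that the balance could not have come out right.

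There is also a smaller issue. You only claim $f_n(t,x,v)\geq1/2$ at the selected velocity and propose to compensate by enlarging the spatial region. Amplitudes are capped at one, so reaching $\norm{f_n(t)}_{X_p}\geq1$ this way needs a set of measure at least $2^p$. Then $K_x$ would depend on $p$, contrary to the statement.

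The paper avoids this by using cutoffs with plateaux: $\eta=1$ on $B_{1/2}$ and $\psi=1$ on $B_{2\sqrt d}$. It shows that the backward characteristic from $(x,v_t(x))$ lands inside both plateaux, so $f(t,x,v_t(x))=1$ exactly. By continuity in $v$, the essential supremum in $v$ then equals one at every $x\in Q$. This gives $\norm{f_n(t)}_{X_p}\geq\abs{Q}^{1/p}=1$ with $K_x=[-2,3]^d$ independent of $p$.
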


\begin{remark}
	The condition $d \le 3$ enter only through the global classical
	existence theory for smooth compactly supported data. In higher dimensions, the same argument applies whenever the
	classical solutions exist up to their observation times.
\end{remark}

The construction exploits the order of the mixed norm, which permits a
different velocity to realize the supremum at each spatial point.  We
place packets of radius $r$ on a spatial lattice of spacing $h$ and use a
fixed velocity cutoff.  Their initial spatial envelope has measure
$O((r/h)^d)$, so its $X_p$ norm is $O((r/h)^{d/p})$ and its mass is
$O((r/h)^d)$.  At the observation time $h$, each $x$ in a fixed unit cube
has a nearest lattice point $z$ and an admissible velocity
$v=(x-z)/h$; the free characteristic through $(x,v)$ therefore starts at
$z$, making the transported velocity envelope equal to one throughout
the cube.  For the nonlinear Vlasov--Poisson flow, the Coulomb-field
interpolation estimate and a velocity-support bootstrap give
\begin{align*}
 M^{1/d}&\leq C\frac rh,
 &\sup_{0\leq s\leq h}\norm{E_f(s)}_{L^\infty}&\leq C\frac rh,\\
 \abs{V(0;h,x,v)-v}&\leq Cr,
 &\abs{X(0;h,x,v)-(x-hv)}&\leq Chr.
\end{align*}
Thus, for sufficiently small $h$, the nonlinear backward characteristic
still lands inside the respective spatial and velocity plateaux,
uniformly as $r\to0$.  Taking $h_n\downarrow0$ and $r_n=h_n^3$ gives the
vanishing-time result, whereas fixing $h=t$ and sending $r_n\to0$ gives
the fixed-time result.

Several lower-regularity theories provide useful points of comparison.
Jeong and Tae~\cite{JeongTae} and Tae~\cite{Tae} prove local
well-posedness in phase-space Sobolev classes, while Loeper~\cite{Loeper}
and Iacobelli and Junn\'e~\cite{IacobelliJunne} obtain stability in
transportation distances.  More directly related to the mixed norm,
Nguyen~\cite[Theorem~1]{NguyenFractionalVP} assumes, for $p>d$ and
$m>d+1$, $L_x^p$ control of a velocity supremum and a weighted uniform
$C_v^\theta$ modulus.  On sets where the mass and these two envelopes
are uniformly bounded, the solution map is continuous from
$L^1_{x,v}\cap L^p_{x,v}$ into
$C([0,T];L^1_{x,v}\cap L^p_{x,v})$.  Our packet data satisfy vanishing
bounds for these initial quantities, as recorded in
Section~\ref{sec:consequences}; the discontinuity proved here concerns
the different target topology $X_p$.

Critical Sobolev ill-posedness for Euler and SQG can produce arbitrary
norm amplification or nonexistence in the critical solution class; see
Bourgain and Li~\cite{BourgainLi2015,BourgainLi2021}, Kim and
Jeong~\cite{KimJeongEuler}, and Jeong and Kim~\cite{JeongKimSQG}.  Those
arguments exploit deformation or stretching of transported quantities.
Here the obstruction is already present in free kinetic transport, and
the output $X_p$ norms remain uniformly bounded.  Our conclusion is
therefore discontinuity of the solution maps, not arbitrary norm
inflation or nonexistence for rough data.

The sequences in both theorems converge to zero in the stronger
initial-data norm
$\norm{\cdot}_{X_p}+\norm{\cdot}_{L^1_{x,v}}$.  Thus the same
discontinuity statements hold when $\D$ carries this topology.  Since
$\mathcal S_T(0)=0$ and $S_t(0)=0$, respectively, no extension of either
map to the $X_p$-closure of $\D$ can be continuous at zero while agreeing
with the corresponding classical map on $\D$.

Section~\ref{sec:estimates} establishes the field and characteristic
estimates; Section~\ref{sec:packets} combines them with the packet
construction to prove the theorems.  Section~\ref{sec:discussion}
records the main obstruction to extending the argument to other kinetic
models.

\section{Field and characteristic estimates}\label{sec:estimates}

The Coulomb field satisfies the following interpolation estimate.
Together with conservation of mass and the supremum norm, it controls
the characteristics without bounds on derivatives of the data.

\begin{lemma}
\label{lem:field}
Let $d\geq1$ and let $\rho\in L^1(\R^d)\cap L^\infty(\R^d)$.  Then
\begin{equation}\label{eq:field-bound}
 \norm{K_d*\rho}_{L^\infty}
 \leq C_d
 \norm{\rho}_{L^1}^{1/d}
 \norm{\rho}_{L^\infty}^{1-1/d}.
\end{equation}
\end{lemma}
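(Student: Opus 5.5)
The plan is the standard near/far splitting of the convolution integral, followed by optimization of the splitting radius. Since $\abs{K_d(y)}=c_d\abs{y}^{1-d}$, for every $x\in\R^d$ and every $R>0$ we bound
\[
 \abs{K_d*\rho(x)}
 \leq c_d\int_{\abs{y}<R}\frac{\abs{\rho(x-y)}}{\abs{y}^{d-1}}\dd y
 +c_d\int_{\abs{y}\geq R}\frac{\abs{\rho(x-y)}}{\abs{y}^{d-1}}\dd y .
\]
In the near region we pull out $\norm{\rho}_{L^\infty}$. Integrating $\abs{y}^{1-d}$ in polar coordinates over $B_R$ gives exactly $\abs{\mathbb S^{d-1}}R$, so the first term is at most $R\norm{\rho}_{L^\infty}$. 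In the far region we use $\abs{y}^{1-d}\leq R^{1-d}$, so the second term is at most $c_dR^{1-d}\norm{\rho}_{L^1}$.

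This yields, for every $R>0$,
\[
 \norm{K_d*\rho}_{L^\infty}\leq R\norm{\rho}_{L^\infty}+c_dR^{1-d}\norm{\rho}_{L^1}.
\]
Next we optimize in $R$. If $\rho=0$ a.e.\ there is nothing to prove. Otherwise $\norm{\rho}_{L^\infty}>0$, and we choose $R=(\norm{\rho}_{L^1}/\norm{\rho}_{L^\infty})^{1/d}$, which balances the two terms. Each term then equals a constant times $\norm{\rho}_{L^1}^{1/d}\norm{\rho}_{L^\infty}^{1-1/d}$, which gives \eqref{eq:field-bound} with $C_d=1+c_d$.

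For $d=1$ the same computation still works. Here the kernel is bounded by $1/2$, the far-field weight $R^{0}$ equals $1$, and the chosen $R$ makes the exponents $1/d=1$ and $1-1/d=0$ come out correctly. Alternatively one can note directly that $\norm{K_1*\rho}_{L^\infty}\leq\tfrac12\norm{\rho}_{L^1}$.

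There is no real obstacle here. The only point to check is that the convolution integral converges absolutely, which holds since $\rho\in L^1\cap L^\infty$ and $\abs{y}^{1-d}$ is locally integrable. This also justifies the pointwise bound at every $x$, and hence the supremum bound.
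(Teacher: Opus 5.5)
Your proof is correct and follows the paper's own argument: split the convolution at radius $R$, bound the near part by $\norm{\rho}_{L^\infty}$ times the integral of $\abs{y}^{1-d}$ and the far part by $R^{1-d}\norm{\rho}_{L^1}$, then take $R=(\norm{\rho}_{L^1}/\norm{\rho}_{L^\infty})^{1/d}$. Your explicit constants, including the exact value $R\norm{\rho}_{L^\infty}$ for the near term coming from $c_d=\abs{\mathbb S^{d-1}}^{-1}$, are a harmless refinement of the paper's generic $C_d$.
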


\begin{proof}
Fix $x\in\R^d$ and a radius $s>0$.  Since
$\abs{K_d(z)}\leq C_d\abs{z}^{1-d}$, splitting the convolution into
$\abs{x-y}\leq s$ and $\abs{x-y}>s$ gives
\begin{align}
 \abs{(K_d*\rho)(x)}
 &\leq C_d\norm{\rho}_{L^\infty}
       \int_{\abs{z}\leq s}\abs{z}^{1-d}\dd z
   +C_d s^{1-d}\norm{\rho}_{L^1} \notag\\
 &\leq C_d\left(
       s\norm{\rho}_{L^\infty}
       +s^{1-d}\norm{\rho}_{L^1}\right).
 \label{eq:field-split}
\end{align}
If $\rho=0$, there is nothing to prove.  Otherwise choose
\begin{equation*}
 s=\left(\frac{\norm{\rho}_{L^1}}
                 {\norm{\rho}_{L^\infty}}\right)^{1/d}.
\end{equation*}
Substitution in \eqref{eq:field-split} gives \eqref{eq:field-bound}.
\end{proof}

Mass and initial velocity support control the deviation of the
characteristics from free transport.

\begin{lemma}
\label{lem:characteristics}
Let $T>0$ and let $f$ be the classical solution of \eqref{eq:VP}
with nonnegative initial datum $f_0\in C_c^\infty(\R^{2d})$.
Let $R_0>0$, and suppose
\begin{equation}\label{eq:char-assumptions}
 0\leq f_0\leq1,
 \qquad
 \operatorname{supp}_v f_0\subset B_{R_0},
 \qquad
 M:=\iint_{\R^{2d}}f_0\dd x\dd v<\infty.
\end{equation}
There are constants $c_0=c_0(d,R_0)>0$ and
$C_0=C_0(d,R_0)>0$ such that, if
\begin{equation}\label{eq:smallness-TM}
 T M^{1/d}\leq c_0,
\end{equation}
then, for $0\leq s\leq T$,
\begin{equation}\label{eq:P-E-bounds}
 \operatorname{supp}_v f(s)\subset B_{R_0+1},
 \qquad
 \norm{E_f(s)}_{L^\infty}\leq C_0M^{1/d}.
\end{equation}
Moreover, every backward characteristic ending at $(x,v)$ at time
$t\leq T$ satisfies
\begin{align}
 \abs{V(s;t,x,v)-v}
 &\leq C_0(t-s)M^{1/d},
 &&0\leq s\leq t, \label{eq:V-error}\\
 \abs{X(0;t,x,v)-(x-tv)}
 &\leq \frac{C_0}{2}t^2M^{1/d}.
 \label{eq:X-error}
\end{align}
\end{lemma}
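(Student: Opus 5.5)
The argument is a continuity (bootstrap) argument on the velocity support, fed by Lemma~\ref{lem:field} and the conservation laws of the flow. Since $f$ is transported by a measure-preserving flow, $\norm{f(s)}_{L^\infty}\leq1$ and $\iint f(s)\dd x\dd v=M$ for all $s$. If at time $s$ the velocity support lies in $B_{R_0+1}$, then
\[
 \rho_f(s,x)\leq\abs{B_{R_0+1}}=:A,\qquad \norm{\rho_f(s)}_{L^1}=M .
\]
Lemma~\ref{lem:field} then gives
\[
 \norm{E_f(s)}_{L^\infty}\leq C_dM^{1/d}A^{1-1/d}=:C_0'M^{1/d}.
\]
We define $T^*$ as the supremum of times $\tau\in[0,T]$ such that $\operatorname{supp}_vf(s)\subset \overline{B_{R_0+1/2}}$ for all $s\leq\tau$. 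Because the classical solution has compact support that moves continuously under a continuous flow, $T^*>0$ and the set of such $\tau$ is closed.

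On $[0,T^*]$ the field bound holds. Any characteristic starting from $(y,w)$ with $w\in B_{R_0}$ therefore satisfies
\[
 \abs{V(s)-w}\leq C_0'sM^{1/d}\leq C_0'c_0.
\]
Choosing $c_0\leq 1/(4C_0')$ makes this at most $1/4$. Hence the support actually lies in $\overline{B_{R_0+1/4}}$, which is strictly inside $B_{R_0+1/2}$. Continuity of the flow then pushes $T^*$ past any value below $T$, so $T^*=T$ and the first part of \eqref{eq:P-E-bounds} holds with room to spare. We then set $C_0:=C_0'$ (or take $C_0=\max(C_0',\text{anything needed})$). The closedness/openness step, namely making the support-inclusion argument rigorous, is the only delicate point. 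I would phrase it via the quantity $P(\tau)=\sup\{\abs{V(s;0,y,w)}:s\leq\tau,(y,w)\in\operatorname{supp}f_0\}$, which is continuous in $\tau$ because the characteristics are uniformly continuous on the compact support.

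For the characteristic estimates, note first that the field bound holds on all of $[0,T]$ and everywhere in $x$, so it applies to any characteristic, not just those in the support. Integrating $\dot V=\sigma E_f$ backward from time $t$ gives
\[
 \abs{V(s;t,x,v)-v}\leq\int_s^t\norm{E_f(\tau)}_{L^\infty}\dd\tau\leq C_0(t-s)M^{1/d},
\]
which is \eqref{eq:V-error}. Then
\[
 X(0;t,x,v)=x-\int_0^tV(s;t,x,v)\dd s,
\]
so
\[
 \abs{X(0;t,x,v)-(x-tv)}\leq\int_0^t\abs{V(s)-v}\dd s\leq C_0M^{1/d}\int_0^t(t-s)\dd s=\frac{C_0}{2}t^2M^{1/d},
\]
which is \eqref{eq:X-error}. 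Both constants depend only on $d$ and $R_0$ through $A$.
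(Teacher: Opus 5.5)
Your proposal is correct and is essentially the paper's proof: a continuity argument on the velocity-support function $P$, in which the density bound $\rho_f\leq\abs{B_{R_0+1}}$ is fed into Lemma~\ref{lem:field} to get a uniform field bound, followed by backward integration of the characteristic equations. The only differences are cosmetic. You bootstrap the threshold $R_0+1/2$ with $c_0\leq 1/(4C_0')$, whereas the paper bootstraps $P\leq R_0+1$ with $C_0c_0\leq 1/2$.
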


\begin{proof}
The transport identity \eqref{eq:transport} and incompressibility of the
phase-space vector field imply
\begin{equation}\label{eq:conservation}
 0\leq f(s)\leq1,
 \qquad
 \iint f(s,x,v)\dd x\dd v=M
\end{equation}
for every time for which the classical solution exists.
If $M=0$, nonnegativity and \eqref{eq:conservation} imply
$f\equiv0$, so the conclusions hold.  We may therefore assume $M>0$.

Define the nondecreasing velocity-support function
\begin{equation}\label{eq:Pdef}
 P(s)=\sup\left\{\abs{v}:
       f(\tau,x,v)\neq0\text{ for some }0\leq\tau\leq s
       \text{ and }x\in\R^d\right\}.
\end{equation}
Let $(X(\tau;0,x_0,v_0),V(\tau;0,x_0,v_0))$ denote the forward
characteristic from $(x_0,v_0)$.  Since the flow is continuous and
$\operatorname{supp}f_0$ is compact,
\[
 P(s)=\max_{\substack{(x_0,v_0)\in\operatorname{supp}f_0\\0\leq\tau\leq s}}
       \abs{V(\tau;0,x_0,v_0)}.
\]
Thus $P$ is finite and continuous, by uniform continuity of $V$ on
the compact set of initial points and times.

By \eqref{eq:conservation},
\begin{equation}\label{eq:rho-P}
 \rho_f(s,x)
 =\int_{\abs{v}\leq P(s)}f(s,x,v)\dd v
 \leq \abs{B_1}P(s)^d.
\end{equation}
Mass conservation gives $\norm{\rho_f(s)}_{L^1}=M$.  Lemma~\ref{lem:field}
therefore yields
\begin{equation}\label{eq:E-P}
 \norm{E_f(s)}_{L^\infty}
 \leq C_dM^{1/d}P(s)^{d-1}.
\end{equation}

Set $R_1=R_0+1$, and let
\begin{equation*}
 T_1=\sup\left\{\tau\in[0,T]:P(s)\leq R_1
                    \text{ for every }0\leq s\leq\tau\right\}.
\end{equation*}
This number is positive because $P(0)\leq R_0$.  For $0\leq s\leq T_1$,
\eqref{eq:E-P} gives
\begin{equation}\label{eq:E-bootstrap}
 \norm{E_f(s)}_{L^\infty}
 \leq C_dR_1^{d-1}M^{1/d}=:C_0M^{1/d}.
\end{equation}
Along any characteristic issuing from the initial support,
\begin{equation*}
 \abs{V(s)}
 \leq R_0+\int_0^s\norm{E_f(\tau)}_{L^\infty}\dd\tau
 \leq R_0+C_0TM^{1/d}.
\end{equation*}
Choose $c_0>0$ so that $C_0c_0\leq1/2$.  Under
\eqref{eq:smallness-TM}, we then have $P(s)\leq R_0+1/2<R_1$ on
$[0,T_1]$.  By continuity, $T_1<T$ would contradict the definition of
$T_1$.  Hence $T_1=T$, proving \eqref{eq:P-E-bounds}.

For a backward characteristic with $(X(t),V(t))=(x,v)$, integration of the
second equation in \eqref{eq:characteristics} gives
\begin{equation*}
 V(s)-v=-\sigma\int_s^tE_f(\tau,X(\tau))\dd\tau.
\end{equation*}
The field bound in \eqref{eq:P-E-bounds} proves \eqref{eq:V-error}.  Since
\begin{equation*}
 X(0)=x-\int_0^tV(s)\dd s,
\end{equation*}
we also obtain
\begin{align*}
 \abs{X(0)-(x-tv)}
 &\leq\int_0^t\abs{V(s)-v}\dd s\\
 &\leq C_0M^{1/d}\int_0^t(t-s)\dd s
 =\frac{C_0}{2}t^2M^{1/d},
\end{align*}
which is \eqref{eq:X-error}.
\end{proof}

\section{Packet construction and proofs of the main results}\label{sec:packets}

\subsection{The packet construction}

We place the packets on a lattice around the unit cube
\begin{equation}\label{eq:cubes}
 Q=(0,1)^d.
\end{equation}
Choose functions $\eta,\psi\in C_c^\infty(\R^d)$ such that
\begin{align}
 &0\leq\eta\leq1,
 \qquad
 \eta=1\text{ on }B_{1/2},
 \qquad
 \operatorname{supp}\eta\subset B_1, \label{eq:eta}\\
 &0\leq\psi\leq1,
 \qquad
 \psi=1\text{ on }B_{2\sqrt d},
 \qquad
 \operatorname{supp}\psi\subset B_R                 \label{eq:psi}
\end{align}
for some fixed $R>2\sqrt d$.

Given parameters $0<h<1$ and $0<r<h/4$, define the finite grid
\begin{equation}\label{eq:grid}
 \Gamma_h=h\mathbb Z^d\cap[-1,2]^d
\end{equation}
and the spatial packet profile
\begin{equation}\label{eq:a-hr}
 a_{h,r}(x)=\sum_{z\in\Gamma_h}
             \eta\left(\frac{x-z}{r}\right).
\end{equation}
The distance between two different grid points is at least $h$, so the
supports in \eqref{eq:a-hr} are disjoint.  We set
\begin{equation}\label{eq:f-hr}
 f_0^{h,r}(x,v)=a_{h,r}(x)\psi(v).
\end{equation}

Here $h$ is both the lattice spacing and the observation time.  Since
$\#\Gamma_h=O(h^{-d})$, the packets occupy spatial measure
$O((r/h)^d)$; hence $r/h\to0$ makes the initial $X_p$ norm and mass
vanish.  Lemma~\ref{lem:characteristics} gives velocity and position
errors $O(r)$ and $O(hr)$ on $[0,h]$.  Thus small $h$ keeps the spatial
error inside a packet of radius $r$.  The choice $r=h^3$ in the proof of
Theorem~\ref{thm:norm-inflation} is convenient but not essential; in
particular, the characteristic argument does not require $r\ll h^2$.

\begin{proposition}
\label{prop:packets}
For $d \le 3$ and either $\sigma\in\{-1,1\}$, there exist
$h_*\in(0,1]$ and $r_*>0$, depending only on $d$, $\eta$,
and $\psi$, such that for every
\[
 0<h<h_*,
 \qquad
 0<r<\min\{h/4,r_*\},
\]
the datum \eqref{eq:f-hr} has the following properties for every
$1\leq p<\infty$:
\begin{align}
 &f_0^{h,r}\in C_c^\infty(\R^{2d}),
 \qquad
 0\leq f_0^{h,r}\leq1,
 \qquad
 \operatorname{supp}_vf_0^{h,r}\subset B_R,
 \label{eq:packet-basic}\\
 &\norm{f_0^{h,r}}_{X_p}
 \leq C_{d,p,\eta}\left(\frac rh\right)^{d/p},
 \label{eq:packet-Xp}\\
 &M_{h,r}:=\norm{f_0^{h,r}}_{L^1_{x,v}}
 \leq C_{d,\eta,\psi}\left(\frac rh\right)^d.
 \label{eq:packet-mass}
\end{align}
If $f^{h,r}$ is the corresponding classical solution, then
\begin{equation}\label{eq:packet-output-pointwise}
 \mathop{\rm ess\,sup}_{v\in\R^d}f^{h,r}(h,x,v)=1
 \qquad\text{for every }x\in Q.
\end{equation}
In particular,
\begin{equation}\label{eq:packet-output}
 \norm{f^{h,r}(h)}_{X_p}\geq\abs{Q}^{1/p}=1.
\end{equation}
\end{proposition}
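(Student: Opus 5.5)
The plan is to check the static properties \eqref{eq:packet-basic}--\eqref{eq:packet-mass} directly from the construction, and then prove the pointwise lower bound \eqref{eq:packet-output-pointwise} by combining the free-transport selection of a grid point with Lemma~\ref{lem:characteristics}.

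\emph{Static properties.} Smoothness, compact support, and $0\le f_0^{h,r}\le1$ follow from \eqref{eq:eta}, \eqref{eq:psi}, and the disjointness of the packet supports, since $r<h/4$. Moreover $\operatorname{supp}_v f_0^{h,r}\subset B_R$ by \eqref{eq:psi}.

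\emph{Initial $X_p$ norm and mass.} We have $\sup_v f_0^{h,r}(x,\cdot)=a_{h,r}(x)\le \mathbf 1_{\bigcup_z B(z,r)}(x)$. Since $\#\Gamma_h\le (3/h+1)^d\le C_d h^{-d}$, this gives
\[
\norm{f_0^{h,r}}_{X_p}\le \bigl(C_dh^{-d}\abs{B_1}r^d\bigr)^{1/p},
\]
which is \eqref{eq:packet-Xp}. Likewise
\[
M_{h,r}\le C_dh^{-d}r^d\norm{\eta}_{L^1}\norm{\psi}_{L^1},
\]
which is \eqref{eq:packet-mass}.

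\emph{Dynamics.} Apply Lemma~\ref{lem:characteristics} with $R_0=R$ and $T=h$. Since $M_{h,r}^{1/d}\le C r/h$, we get $hM_{h,r}^{1/d}\le Cr$. Choosing $r_*$ so that $Cr_*\le c_0$ makes \eqref{eq:smallness-TM} hold. The lemma then yields
\[
\abs{V(0;h,x,v)-v}\le C_0hM^{1/d}\le C_1 r,
\qquad
\abs{X(0;h,x,v)-(x-hv)}\le \tfrac{C_0}{2}h^2M^{1/d}\le C_1 hr.
\]

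\emph{Selection of the velocity.} Fix $x\in Q$ and let $z\in h\mathbb Z^d$ be a nearest lattice point, so $\abs{x-z}\le \sqrt d\,h/2$. Because $x\in(0,1)^d$, this $z$ lies in $[-1,2]^d$, hence $z\in\Gamma_h$. Set $v_0=(x-z)/h$, so $\abs{v_0}\le\sqrt d/2$.

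For $v$ in a small ball $B(v_0,\delta)$, the backward characteristic satisfies
\[
\abs{X(0)-z}\le h\delta+C_1hr,
\qquad
\abs{V(0)}\le \sqrt d/2+\delta+C_1r.
\]
Take $\delta=r/8$ (so $h\delta\le r/8$) and $h_*$ small enough that $C_1h_*\le 1/8$. Then $C_1hr\le r/8$, so $\abs{X(0)-z}\le r/4<r/2$, which puts $X(0)$ in the plateau $\{\eta((\cdot-z)/r)=1\}$. Shrinking $r_*$ so that $r_*/8+C_1r_*\le\sqrt d$ gives $\abs{V(0)}<2\sqrt d$, which puts $V(0)$ in the plateau of $\psi$.

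By \eqref{eq:transport}, $f^{h,r}(h,x,v)=1$ on the open set $B(v_0,\delta)$, which has positive measure. Therefore the essential supremum is at least $1$, and since $f\le1$ it equals $1$. Integrating over $Q$ gives \eqref{eq:packet-output}.

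The only delicate point is making sure the field-induced spatial error $O(hr)$ is smaller than the plateau radius $r/2$ uniformly in $r$. The bound $hM^{1/d}\lesssim r$ is what makes this work: the smallness comes from $h$, not from a relation between $r$ and $h$. All constants depend only on $d,\eta,\psi$ (with $R$ fixed by $\psi$), since $C_0$ and $c_0$ depend on $d$ and $R_0=R$.
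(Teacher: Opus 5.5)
Your proof is correct and follows the paper's argument. It uses the same direct computation of the static bounds, the same application of Lemma~\ref{lem:characteristics} with $R_0=R$, $T=h$ and $hM_{h,r}^{1/d}\lesssim r$, and the same nearest-grid-point velocity $v_0=(x-z)/h$ with plateau margins $r/2$ and $2\sqrt d$. The only cosmetic difference is that you get $f^{h,r}(h,x,\cdot)=1$ on the whole ball $B(v_0,r/8)$ by applying the characteristic bounds to perturbed terminal velocities, whereas the paper proves $f^{h,r}(h,x,v_h(x))=1$ at a single velocity and then uses continuity of $f^{h,r}$ in $v$.
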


\begin{proof}
The number of grid points satisfies
\[
 \#\Gamma_h\leq C_dh^{-d}.
\]
Because the supports of the terms in \eqref{eq:a-hr} are disjoint,
$0\leq a_{h,r}\leq1$, which gives \eqref{eq:packet-basic}.
Since $h<1$ and $r<h/4<1/4$, we also have the common spatial support bound
\begin{equation}\label{eq:common-support}
 \operatorname{supp}a_{h,r}\subset K_x:=[-2,3]^d.
\end{equation}
Moreover,
\begin{align}
 \norm{f_0^{h,r}}_{X_p}^p
 &=\norm{a_{h,r}}_{L^p_x}^p \notag\\
 &=(\#\Gamma_h)r^d\norm{\eta}_{L^p}^p
 \leq C_{d,\eta}h^{-d}r^d.
 \label{eq:packet-Xp-proof}
\end{align}
Taking the $p$th root proves \eqref{eq:packet-Xp}.  Similarly,
\begin{align}
 M_{h,r}
 &=\left(\int_{\R^d}a_{h,r}(x)\dd x\right)
   \left(\int_{\R^d}\psi(v)\dd v\right)\notag\\
 &=(\#\Gamma_h)r^d\norm{\eta}_{L^1}\norm{\psi}_{L^1}
 \leq C_{d,\eta,\psi}\left(\frac rh\right)^d,
 \label{eq:packet-mass-proof}
\end{align}
which proves \eqref{eq:packet-mass}.

To estimate the displacement caused by the field, note first that
\eqref{eq:packet-mass} gives
\begin{equation}\label{eq:M-root}
 M_{h,r}^{1/d}\leq C_1\frac rh.
\end{equation}
Take $T=h$ in Lemma~\ref{lem:characteristics}.  Its smallness condition is
satisfied once
\begin{equation*}
 hM_{h,r}^{1/d}\leq C_1r\leq c_0.
\end{equation*}
Write $E^{h,r}:=E_{f^{h,r}}$.  Under this condition, for $0\leq s\leq h$,
\begin{equation}\label{eq:packet-field}
 \norm{E^{h,r}(s)}_{L^\infty}\leq C_2\frac rh.
\end{equation}
For every backward characteristic ending at $(x,v)$ at time $h$,
Lemma~\ref{lem:characteristics} and \eqref{eq:M-root} give
\begin{align}
 \abs{V(0;h,x,v)-v}
 &\leq C_3hM_{h,r}^{1/d}\leq C_3r,
 \label{eq:packet-V-error}\\
 \abs{X(0;h,x,v)-(x-hv)}
 &\leq C_3h^2M_{h,r}^{1/d}\leq C_3hr.
 \label{eq:packet-X-error}
\end{align}
Here $C_3=C_0C_1$ is admissible, with $C_0$ as in
Lemma~\ref{lem:characteristics} for $R_0=R$.

Fix $x\in Q$.  For $h<1$, a nearest point of the full lattice
$h\mathbb Z^d$ to $x$ belongs to $[-1,2]^d$.  Hence there is a point
$z=z_h(x)\in\Gamma_h$ such that
\begin{equation}\label{eq:nearest-grid}
 \abs{x-z}\leq\frac{\sqrt d}{2}h.
\end{equation}
Define the final velocity
\begin{equation}\label{eq:target-velocity}
 v_h(x)=\frac{x-z_h(x)}h.
\end{equation}
Then
\begin{equation}\label{eq:target-velocity-bound}
 \abs{v_h(x)}\leq\frac{\sqrt d}{2}.
\end{equation}
For free transport, the backward trajectory with terminal point
$(x,v_h(x))$ lands exactly at
\[
 (x-hv_h(x),v_h(x))=(z_h(x),v_h(x)).
\]
The spatial component is the center of the $z_h(x)$ packet, and the
velocity component lies strictly inside the plateau of $\psi$.  We now
verify that the nonlinear characteristic stays within these two
plateaux.

Let $(X(s),V(s))$ be the backward characteristic satisfying
$(X(h),V(h))=(x,v_h(x))$.  Set
\[
 h_*=\min\left\{1,\frac{1}{4C_3}\right\},
 \qquad
 r_*=\min\left\{\frac{c_0}{2C_1},\frac{\sqrt d}{2C_3}\right\}.
\]
These choices give
\begin{equation}\label{eq:hstar-choice}
 C_3h_*<\frac12
\end{equation}
and
\begin{equation}\label{eq:rstar-choice}
 C_1r_*\leq c_0,
 \qquad
 C_3r_*<\sqrt d.
\end{equation}
Because $x-hv_h(x)=z_h(x)$, the spatial error estimate and
\eqref{eq:hstar-choice} give
\begin{equation}\label{eq:initial-X-plateau}
 \abs{X(0)-z_h(x)}<\frac r2,
 \qquad
 a_{h,r}(X(0))=1.
\end{equation}
Indeed, the first inequality places $X(0)$ in the ball on which the
$z_h(x)$ summand in \eqref{eq:a-hr} equals one; disjointness and
$0\leq a_{h,r}\leq1$ then give the second equality.  For the velocity
component, \eqref{eq:target-velocity-bound},
\eqref{eq:packet-V-error}, and \eqref{eq:rstar-choice} yield the explicit
margin
\[
 \abs{V(0)}
 \leq \abs{v_h(x)}+C_3r
 <\frac{\sqrt d}{2}+\sqrt d
 =\frac{3\sqrt d}{2}<2\sqrt d.
\]
Hence
\[
 \psi(V(0))=1.
\]
The plateau estimates are uniform in $x\in Q$ and $r<r_*$.  Conservation
along characteristics therefore gives
\begin{equation}\label{eq:value-one}
 f^{h,r}(h,x,v_h(x))
 =f_0^{h,r}(X(0),V(0))=1.
\end{equation}

The solution is continuous and satisfies $0\leq f^{h,r}\leq1$.
For each $0<\varepsilon<1$, \eqref{eq:value-one} and continuity in
velocity give an open ball on which $f^{h,r}(h,x,v)>1-\varepsilon$.
Since this ball has positive measure,
\begin{equation*}
 \mathop{\rm ess\,sup}_{v\in\R^d}f^{h,r}(h,x,v)=1.
\end{equation*}
This is \eqref{eq:packet-output-pointwise}.  Integrating over $Q$ gives
\eqref{eq:packet-output}.
\end{proof}

\subsection{Proofs of the main theorems}\label{sec:proofs}

Both theorems follow by choosing the packet spacing and radius in
Proposition~\ref{prop:packets}.

\begin{proof}[Proof of Theorem~\ref{thm:norm-inflation}]
Choose a sequence $h_n\downarrow0$ such that, for every $n$,
\[
 h_n<h_*,
 \qquad
 h_n^3<\min\{h_n/4,r_*\},
\]
and set
\begin{equation}\label{eq:diagonal-choice}
 t_n=h_n,
 \qquad
 r_n=h_n^3.
\end{equation}
Define
\begin{equation*}
 f_{0,n}=f_0^{h_n,r_n}.
\end{equation*}
The amplitude and common support assertions in \eqref{eq:theorem-data}
follow from \eqref{eq:packet-basic} and \eqref{eq:common-support}.  Since $r_n/h_n=h_n^2$,
\eqref{eq:packet-Xp} and \eqref{eq:packet-mass} give
\begin{align*}
 \norm{f_{0,n}}_{X_p}
 &\leq C h_n^{2d/p}\longrightarrow0,\\
 \norm{f_{0,n}}_{L^1_{x,v}}
 &\leq C h_n^{2d}\longrightarrow0.
\end{align*}
Equation~\eqref{eq:packet-output} gives
\begin{equation*}
 \norm{f_n(t_n)}_{X_p}\geq1.
\end{equation*}
If $\mathcal S_T$ were continuous at zero with values in
$L^\infty([0,T];X_p)$, then \eqref{eq:time-sup} and the initial convergence in $X_p$ would imply
\begin{equation*}
 \sup_{0\leq t\leq T}\norm{f_n(t)}_{X_p}\longrightarrow0.
\end{equation*}
For all large $n$, however, $t_n<T$ and the left side is at least one,
a contradiction.
\end{proof}

\begin{proof}[Proof of Theorem~\ref{thm:fixed-time}]
Take $t_*=h_*$ and the sets $K_x$ and $B_R$ from the construction.
Fix $t\in(0,t_*)$, set $h=t$, and choose any sequence
$r_n\downarrow0$ such that
$r_n<\min\{t/4,r_*\}$ for every $n$.  Put
\begin{equation*}
 f_{0,n}^{(t)}=f_0^{t,r_n}.
\end{equation*}
Proposition~\ref{prop:packets} and \eqref{eq:common-support} give the
amplitude and common support bounds.  Since $t$ is fixed and $r_n/t\to0$,
\eqref{eq:packet-Xp} and \eqref{eq:packet-mass} give
\begin{equation*}
 \norm{f_{0,n}^{(t)}}_{X_p}
 \leq C\left(\frac{r_n}{t}\right)^{d/p}\longrightarrow0,
 \qquad
 \norm{f_{0,n}^{(t)}}_{L^1_{x,v}}
 \leq C\left(\frac{r_n}{t}\right)^d\longrightarrow0.
\end{equation*}
On the other hand, \eqref{eq:packet-output} yields
\begin{equation*}
 \norm{f_n^{(t)}(t)}_{X_p}\geq1
\end{equation*}
for every $n$.  Thus $S_t$ is discontinuous at zero.
\end{proof}

\subsection{Consequences and endpoint remarks}\label{sec:consequences}

The same construction gives a self-contained free-transport statement.

\begin{corollary}[Free transport]
\label{cor:free-transport}
Let $d \le 3$ and $1\leq p<\infty$.  For every $h\in(0,1)$, the
fixed-time free-transport map
\[
 U(h):(\D,\norm{\cdot}_{X_p})\longrightarrow X_p,
 \qquad
 U(h)g(x,v)=g(x-hv,v),
\]
is discontinuous at zero.  Moreover, for every $T>0$, the trajectory map
\[
 \mathcal U_T:(\D,\norm{\cdot}_{X_p})
 \longrightarrow L^\infty([0,T];X_p),
 \qquad
 \mathcal U_Tg=U(\cdot)g,
\]
is discontinuous at zero.
In both assertions, the approximating data may be chosen nonnegative,
bounded by one, supported in a common compact subset of phase space, and
convergent to zero in $L^1_{x,v}$.
\end{corollary}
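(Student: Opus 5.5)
The plan is to reuse the packet data $f_0^{h,r}$ from \eqref{eq:f-hr} and run the argument of Proposition~\ref{prop:packets} with the field switched off. This removes every use of Lemma~\ref{lem:characteristics}, so the restriction $h<h_*$, $r<r_*$ disappears. The only constraints left are $0<h<1$ and $0<r<h/4$.

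First, the properties \eqref{eq:packet-basic}, \eqref{eq:packet-Xp}, \eqref{eq:packet-mass} and \eqref{eq:common-support} concern only the initial datum, so they hold verbatim. The data are nonnegative, bounded by one, supported in $K_x\times B_R$, and have $X_p$ norm and mass of order $(r/h)^{d/p}$ and $(r/h)^d$ respectively.

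Second, fix $x\in Q$ and choose $z_h(x)\in\Gamma_h$ and $v_h(x)$ as in \eqref{eq:nearest-grid} and \eqref{eq:target-velocity}. Then
\[
 U(h)f_0^{h,r}(x,v_h(x))=a_{h,r}(z_h(x))\,\psi(v_h(x))=\eta(0)\,\psi(v_h(x))=1,
\]
where $\psi(v_h(x))=1$ because of \eqref{eq:target-velocity-bound}. We have $U(h)f_0^{h,r}\leq1$, and $U(h)f_0^{h,r}$ is continuous in $v$. Hence, exactly as at the end of the proof of Proposition~\ref{prop:packets}, the essential supremum in $v$ equals $1$ at every $x\in Q$, and therefore $\norm{U(h)f_0^{h,r}}_{X_p}\geq1$. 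For the fixed-time statement, fix $h\in(0,1)$, take $r_n\downarrow0$ with $r_n<h/4$, and use the bounds above to see that the data tend to zero in $X_p$ and in $L^1$ while the outputs have norm at least one.

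For the trajectory statement, fix $T>0$. Take $h_n\downarrow0$ with $h_n<\min\{1,T\}$ and $r_n=h_n^3$, so that $r_n/h_n=h_n^2\to0$. The trajectory $s\mapsto U(s)g$ lies in $C([0,T];X_p)$ by the same uniform-continuity argument used for \eqref{eq:time-sup}; this only needs compact support and smoothness. Therefore
\[
 \norm{\mathcal U_Tf_{0,n}}_{L^\infty([0,T];X_p)}=\sup_{s\le T}\norm{U(s)f_{0,n}}_{X_p}\geq\norm{U(h_n)f_{0,n}}_{X_p}\geq1,
\]
which contradicts continuity at zero.

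There is no real obstacle here. The only points to check are that the nearest lattice point lies in $\Gamma_h$ for $x\in Q$, and the passage from a pointwise value to the essential supremum. Both were already handled in Proposition~\ref{prop:packets}. The restriction $d\le3$ is not needed for free transport, but it is harmless.
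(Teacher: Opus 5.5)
Your proposal is correct and follows the paper's proof essentially verbatim: the packet data with the field switched off, the nearest-lattice-point velocity $v_h(x)$ with continuity giving the essential supremum, and $r_n=h_n^3$ for the trajectory map. One condition is missing. In the trajectory part, $h_n<\min\{1,T\}$ alone does not give $r_n=h_n^3<h_n/4$, which needs $h_n<1/2$; the paper imposes this explicitly, and you can get it by discarding finitely many terms.
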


\begin{proof}
Fix $h\in(0,1)$ and choose $r_n\downarrow0$ with $r_n<h/4$.  Set
$g_n=f_0^{h,r_n}$.  By construction, $0\leq g_n\leq1$ and
$\operatorname{supp}g_n\subset[-2,3]^d\times B_R$.  The computations in
\eqref{eq:packet-Xp-proof} and \eqref{eq:packet-mass-proof} give
\[
 \norm{g_n}_{X_p}
 \leq C\left(\frac{r_n}{h}\right)^{d/p}\longrightarrow0,
 \qquad
 \norm{g_n}_{L^1_{x,v}}
 \leq C\left(\frac{r_n}{h}\right)^d\longrightarrow0.
\]
For each $x\in Q$, choose $z_h(x)$ and $v_h(x)$ as in
\eqref{eq:nearest-grid}--\eqref{eq:target-velocity}.  Then
$x-hv_h(x)=z_h(x)$ and $\abs{v_h(x)}\leq\sqrt d/2$.  Since
$\eta(0)=1$ and $\psi=1$ on $B_{2\sqrt d}$,
\[
 U(h)g_n(x,v_h(x))
 =a_{h,r_n}(z_h(x))\psi(v_h(x))=1.
\]
The function $v\mapsto U(h)g_n(x,v)$ is continuous and takes values in
$[0,1]$.  Hence its essential supremum equals one for every $x\in Q$,
and therefore
\[
 \norm{U(h)g_n}_{X_p}\geq\abs{Q}^{1/p}=1.
\]
For the trajectory assertion, choose $h_n\downarrow0$ and
$r_n=h_n^3$ so that
\[
 h_n<\min\{T,1\},
 \qquad
 r_n<h_n/4
\]
for every $n$, and set $g_n=f_0^{h_n,r_n}$.  Then
\[
 \norm{g_n}_{X_p}\leq Ch_n^{2d/p}\longrightarrow0,
 \qquad
 \norm{g_n}_{L^1_{x,v}}\leq Ch_n^{2d}\longrightarrow0,
 \qquad
 \sup_{0\leq t\leq T}\norm{U(t)g_n}_{X_p}
 \geq\norm{U(h_n)g_n}_{X_p}\geq1.
\]
The same amplitude and common-support bounds hold for this sequence.
Thus $\mathcal U_T$ is discontinuous at zero.
\end{proof}

The proof uses a terminal velocity $v_h(x)$ that varies with $x$, so
taking the velocity supremum before integrating in space is essential.
By contrast, the reversed norm
\begin{equation*}
 L_v^\infty L_x^p,
 \qquad
 \norm{g}_{L_v^\infty L_x^p}
 =\mathop{\rm ess\,sup}_v\norm{g(\cdot,v)}_{L_x^p},
\end{equation*}
is preserved by free transport because, for each fixed $v$, the map
$x\mapsto x-tv$ is a translation:
\[
 \norm{U(t)g}_{L_v^\infty L_x^p}
 =\mathop{\rm ess\,sup}_v
   \norm{g(\cdot-tv,v)}_{L_x^p}
 =\norm{g}_{L_v^\infty L_x^p}.
\]

At $p=\infty$, the mixed norm is the phase-space supremum norm and is
preserved by the classical flow.  Since $S_t(0)=0$, we have
\[
 \norm{S_tg-S_t(0)}_{L^\infty_{x,v}}=\norm{g}_{L^\infty_{x,v}},
 \qquad
 \norm{\mathcal S_Tg}_{L^\infty([0,T];L^\infty_{x,v})}
 =\norm{g}_{L^\infty_{x,v}}.
\]
Thus both solution maps are continuous at zero in the supremum-norm
topology, so the discontinuity conclusions do not extend to this endpoint.
For every finite $q\geq1$, the
constructed data satisfy
\[
 \norm{f_0}_{L^q_{x,v}}^q\leq\norm{f_0}_{L^1_{x,v}}=M.
\]
The same inequality holds at every later time by transport and mass
conservation.  Hence the solutions in either construction converge to
zero in every finite phase-space $L^q$ norm, uniformly in time, even
though their $X_p$ norms at the observation times are at least one.

For the comparison with \cite{NguyenFractionalVP}, define the weighted
velocity envelopes
\begin{align*}
 G_0(x)
 &=\mathop{\rm ess\,sup}_{v\in\R^d}
   \langle v\rangle^m\abs{f_0(x,v)},\\
 G_\theta(x)
 &=\mathop{\rm ess\,sup}_{\substack{v\in\R^d\\0<\abs{w}\leq1}}
   \langle v\rangle^m
   \frac{\abs{f_0(x,v+w)-f_0(x,v)}}{\abs{w}^\theta}.
\end{align*}
For $f_0^{h,r}=a_{h,r}\psi$, the fixed smooth compactly supported
velocity cutoff gives
\[
 G_0(x)+G_\theta(x)\leq C_{m,\theta,\psi}a_{h,r}(x).
\]
Consequently, for every fixed $p>d$, $m>d+1$, and $0<\theta\leq1$,
\[
 \norm{f_0^{h,r}}_{L^1_{x,v}}
 +\norm{G_0}_{L_x^p}+\norm{G_\theta}_{L_x^p}
 \longrightarrow0
 \qquad\text{as }r/h\longrightarrow0.
\]
Thus the present discontinuity in $X_p$ is compatible with the
phase-space continuous dependence in \cite{NguyenFractionalVP}.

The output norms also have a uniform upper bound:
\[
 \norm{f^{h,r}(h)}_{X_p}
 \leq \bigl[3+2r+2(R+1)h\bigr]^{d/p}.
\]
Indeed, the initial spatial support lies in $[-1-r,2+r]^d$ and the speed
on the transported support is at most $R+1$ up to time $h$.
Combining this spatial support bound with $0\leq f\leq1$ proves the
displayed estimate.  Hence the output norms remain uniformly bounded
despite the discontinuity at zero.

\section{Obstructions in other kinetic models}\label{sec:discussion}

The packet proof uses two properties of Vlasov--Poisson.  First, the free
kinetic flow maps a sparse lattice of packets across a fixed spatial set.
Second, the self-consistent force perturbs the relevant characteristics
by less than the widths of the spatial and velocity plateaux.  The first
property persists in several kinetic models; the second is
model-dependent.

For the three-dimensional relativistic Vlasov--Maxwell system, the free
flow has spatial velocity
\[
 \widehat v=\frac{v}{\sqrt{1+\abs{v}^2}}.
\]
If \(z\) is the nearest lattice point to \(x\), then
\(u=(x-z)/h\) satisfies \(\abs{u}\leq\sqrt3/2<1\), and the momentum
\(v=u/\sqrt{1-\abs{u}^2}\) gives \(x-h\widehat v=z\).  Thus the free
geometry survives with a fixed momentum cutoff.  The present packet
argument would extend under a bound of the form
\[
 \sup_{0\leq s\leq h}
 \bigl(\norm{E(s)}_{L^\infty}+\norm{B(s)}_{L^\infty}\bigr)
 \leq C\frac rh .
\]
Small particle mass controls the longitudinal Coulomb field but not the
evolving transverse fields.  The momentum-support continuation criterion
of Glassey and Strauss~\cite{GlasseyStrauss} does not provide this bound,
so the Vlasov--Maxwell extension does not follow from the present
argument.

Collisions create a different obstruction.  For a nonnegative mild
solution of an angular-cutoff Boltzmann equation, write
\[
 Q(f,f)=Q^+(f,f)-\nu_f f.
\]
If the packet family has a common existence interval, $Q^+\geq0$, and
$\nu_f\leq C$ uniformly along the relevant free characteristics, the
gain--loss formula gives
\[
 f(h,x,v)\geq e^{-Ch}f_0(x-hv,v).
\]
The packet geometry is therefore compatible with cutoff collisions
under these hypotheses.  A discontinuity theorem still requires uniform
velocity-tail and loss-frequency estimates for the concentrated family;
we do not establish them here.

For the Landau equation and the Boltzmann equation without angular
cutoff, gain and loss cannot be separated in this way.  Moreover, along
the free profile \(a_{h,r}(x-sv)\psi(v)\), first and second velocity
derivatives can have sizes \(1+s/r\) and \(1+s^2/r^2\).  Formally, a
second-order collision term with bounded coefficients would therefore
produce an accumulated error of order
\[
 h+\frac{h^3}{r^2}.
\]
This scaling is only heuristic for the nonlinear collisional problem.
The required uniform collision estimate, particularly at a fixed
positive observation time as \(r\to0\), remains open.\vspace{0.5cm}

\noindent{\bf AI-use disclosure}. This manuscript was written by the authors. OpenAI’s ChatGPT was used
as an assistive tool for language editing, organization, bibliographic checks, and exploratory mathematical discussion.\vspace{0.3cm}

\noindent{\bf Acknowledgments}. The research of Ke Chen was supported by the Research Centre for Nonlinear Analysis at The Hong Kong Polytechnic University. The work of In-Jee Jeong was supported by the NRF grant from the Korea government (MSIT), No. 2022R1C1C1011051, RS-2024-00406821, and by a KIAS Individual Grant. The research of Quoc-Hung Nguyen was supported by the CAS Project for Young Scientists in Basic Research (Grant No. YSBR-031) and by the National Natural Science Foundation of China (Grant Nos. 1251101538 and 12595282).

\bigskip
{\small
\begin{flushleft}
\textsc{Ke Chen}\\
Department of Applied Mathematics, The Hong Kong Polytechnic University,\\
Kowloon, Hong Kong, P.~R.~China.\\
\emph{Email:} \href{mailto:k1chen@polyu.edu.hk}{\texttt{k1chen@polyu.edu.hk}}

\medskip
\textsc{In-Jee Jeong}\\
School of Mathematics, Korea Institute for Advanced Study,\\
85 Hoegi-ro, Seoul 02455, Republic of Korea.\\
\emph{Email:} \href{mailto:ijeong@kias.re.kr}{\texttt{ijeong@kias.re.kr}}

\medskip
\textsc{Quoc-Hung Nguyen}\\
Academy of Mathematics and Systems Science, Chinese Academy of Sciences,\\
Beijing 100190, P.~R.~China.\\
\emph{Email:} \href{mailto:qhnguyen@amss.ac.cn}{\texttt{qhnguyen@amss.ac.cn}}

\medskip
\textsc{Sangwook Tae}\\
Department of Mathematical Sciences and RIM, Seoul National University,\\
1 Gwanak-ro, Gwanak-gu, Seoul 08826, Republic of Korea.\\
\emph{Email:} \href{mailto:swtae00@snu.ac.kr}{\texttt{swtae00@snu.ac.kr}}
\end{flushleft}
}

\end{document}